\definecolor{refkey}{rgb}{0,0,1}
\definecolor{labelkey}{rgb}{1,0,0}
\newcommand{\ol}{\overline}
\newcommand{\dt}{\delta}
\newcommand{\lb}{\lambda}
\newcommand{\eq} [1] {\begin{equation}\label{#1}\quad}
\newcommand{\en} {\end{equation}}
\newcommand{\diag}{\mathop{\rm diag}}
\newcommand{\tm}{\times}
\newcommand{\sbs}{\subset}
\newcommand{\iy}{\infty}
\newcommand{\bR}{\mathbb{R}}
\newcommand{\bZ}{\mathbb{Z}}
\newcommand{\bT}{\mathbb{T}}
\newcommand{\bC}{\mathbb{C}}
\newcommand{\bP}{\mathbb{P}}
\newcommand{\bI}{\mathbb{I}}
\newcommand{\calA}{\mathcal{A}}
\newcommand{\calS}{\mathcal{S}}
\newcommand{\calP}{\mathcal{P}}
\newcommand{\calF}{\mathcal{F}}
\newcommand{\calT}{\mathcal{T}}
\newcommand{\calI}{\mathcal{I}}
\newtheorem{theorem}{\bf  Theorem}
\newtheorem{lemma}{\bf  Lemma}
\begin{document}

\begin{center}
		{\bf A numerical algorithm for matrix spectral factorization\\ on the real line}\\[5mm]
		Lasha Ephremidze

{\small			
 New York University Abu Dhabi (NYUAD)


Razmadze Mathematical Institute of I. Javakhishvili Tbilisi State University
}
	\end{center}

\vskip+0.6cm

	{\small{\bf Abstract.} In this paper, the Janashia-Lagvilava matrix spectral factorization algorithm, which is designed for power spectral density functions defined on the unit circle, is extended to the real line. The proposed algorithm can be used directly for continuous-time models
}
\vskip+0.2cm \noindent  {\small {\em  MSC:} 47A68.

\section{Introduction}

Let $\bT$ be the unit circle in the complex plane and $\bR$ be the set of real numbers. Define also
$$
\bT_+:=\{z\in\bC:|z|<1\};\;\;\;\;\bT_-:=\{z\in\bC:|z|>1\}
$$
and
$$
\bC_+:=\{z\in\bC:{\mathcal{I}m}(z)>0\};\;\;\;\;\bC_-:=\{z\in\bC:{\mathcal{I}m}(z)<0\}.
$$

Spectral factorization is the process by which a positive (scalar or matrix-valued) function $S$ defined on $\bT$ (or on $\bR$) is expressed in the form
$$
S(t)=S^+(t)(S^+)^*(t),\;\; \;\;t\in\bT\; (\text{or } t\in\bR),
$$
where $S^+$ can be analytically extended in $\bT_+$ (or in $\bC_+$) and $(S^+)^*$ is its Hermitian conjugate. Such factorization plays a crucial role in the solution to various applied problems in Systems and Signals Theory \cite{Kai99}. Usually $S(t)$ represents a power spectral density function of the system. The case $t\in\bT$ corresponds to the situation where parameters describing the system are discrete, and $t\in\bR$ corresponds to the continuous case. Due to its practical significance, numerous methods of spectral factorization for both discrete and continuous cases have been developed by various authors over the decades (see the survey papers  \cite{Kuc, SayKai} and the references therein, and also \cite{Bott13, Jaf} for more recent results). From mathematical point of view, these two cases are equivalent since there exists a conformal transform $\calT$ between $\bT_\pm$ and $\bC_\pm$ which maps $\bT$ to $\bR$. However, it is natural to expect that if $S(t)$ is constructed on the real line, $t\in\bR$, for a continuous system, then the spectral factor $S^+(t)$  should preferably be found directly on $\bR$ rather than relying on $\calT$.

The Janashia-Lagvilava algorithm \cite{JL99, IEEE2011} is a  method of  spectral factorization for matrix functions $S(t)$ defined on the unit circle $\bT$. It operates without any additional restrictions other than the necessary and sufficient condition for the existence of a spectral factor, and it has proven effective in numerous scenarios (\cite{IEEE2018, CNR, TR22}). However,  the method was not directly applicable to continuous systems.

In the present paper, relying on the Janashia-Lagvilava method and making its suitable modifications, we propose a numerical algorithm for spectral factorization of a matrix function $S(t)$ given on the real line, $t\in \bR$. Namely, let $S$ be an $r\tm r$ matrix function with integrable entries, $S_{ij}\in L^1(\bR)$, such that $S(t)$ is positive definite for a.e. $t\in\bR$, and the Paley-Wiener condition holds
\begin{equation}\label{PW}
\int_\bR\frac{|\log \det S(t)|}{1+t^2}\, dt \not=\infty.
\end{equation}
Then there exists a unique (up to a constant right unitary multiplier)  {\em outer} spectral factor $S^+$ with $S_{ij}^+\in L^2(\bR)$. We construct a sequence of positive definite matrix functions $S_n$ and their explicit spectral factorizations
$$
S_n=S_n^+(S_n^+)^*,
$$
such that
$$
\|S_n-S\|_{L^1(\bR)}\to 0 \;\;\; \text{ and }\;\;\; \|S_n^+-S^+\|_{L^2(\bR)}\to 0.
$$
Consequently, we construct $\hat{S}^+$, an approximate spectral factor of $S$.

Given the comprehensive coverage of the Janashia-Lagvilava algorithm and its generalizations in existing literature \cite{IEEE2011, EJL11, IEEE2018, GMJ2022}, our emphasis will be on detailing essential modifications and distinctions. We will refrain from reiterating the aspects that formally coincide for continuous and discrete cases.

The paper is organized as follows: Notations and definitions are presented in Section 2. Section 3 (Preliminary observations) contains essential statements directly relying on the Janashia-Lagvilava algorithm. In Section 4 (Auxiliary statements), necessary modifications are introduced, while the general description of the algorithm is provided in Section 5. The final Section 6 summarizes concluding thoughts.

\section{Notation and definitions}

Let $\calP$ be the set of trigonometric polynomials on $\bR$:
\begin{equation*}
\calP:=\big\{\sum\nolimits_{k=1}^n c_ke^{i\lb_kx}: \:\;c_k\in\bC,\;\lb_k\in\bR,\; k=1,2,\ldots,n\big\}
\end{equation*}
and let $\calP^\pm\sbs\calP$ be the corresponding subsets with $\lb_k\in\bR^\pm_0$, where $\bR_0^+(\bR_0^-)$ stands for non-negative (non-positive) reals. For $N\geq 1$, let $\calP^\pm_N\sbs\calP^\pm$ be
\begin{equation*}
\calP^\pm_N:=\big\{\sum\nolimits_{k=0}^N c_ke^{i\lb_kx}: \:\;c_k\in\bC,\;\lb_k\in\bR_0^\pm,\; k=0,1,\ldots,N\big\}
\end{equation*}
and, for a fixed $\tau>0$, consider also the following subsets of $\calP^\pm_N$:
\begin{equation*}
\calP^\pm_{\tau,N}:=\big\{\sum\nolimits_{k=0}^N c_ke^{\pm ik\tau x}: \:\;c_k\in\bC,\;\; k=0,1,\ldots,N\big\}.
\end{equation*}

If $p(x)= \sum\nolimits_{k=1}^n c_ke^{i\lb_kx}$, then let $\ol{p}(x)= \sum\nolimits_{k=1}^n \ol{c_k}e^{-i\lb_kx}$. Note that $\ol{p}(x)=\ol{p(x)}$ for each $x\in\bR$.

For $\tau\in\bR$, let $T_\tau:L^0(\bR)\to L^0(\bR)$ be the translation operator on the set of measurable functions: $T_\tau(f)(x)=f(x-\tau)$.

For any set $\calS$, the notation $\calS^{m\times m}$ is used for the set of  ${m\times m}$ matrices with entries from $\calS$.
$I_m=\diag(1,1,\ldots,1)\in\bC^{m\times m}$ stands for the $m\tm m$ identity matrix and  $0_{m\times n}$ is the $m\times n$ matrix consisting of zeros. For a matrix (or a matrix function) $M=[M_{ij}]$, $M^T=[M_{ji}]$ denotes its transpose, and $M^*=[\ol{M_{ji}}]$ denotes its hermitian conjugate, while $[M]_{m\tm m}$ stands for its upper-left $m\tm m$ principal submatrix.

For an analytic function $f$ in the upper half-plane, $f\in\calA(\bC_+)$, it is said that $f$ belongs to the Hardy space $H^p=H^p(\bC_+)$, $p>0$, if
$$
\sup\limits_{y>0} \int_\bR |f(x+iy)|^p\,dx <\iy.
$$
Accordingly, $H^\iy$ is the set of bounded analytic functions in $\bC_+$.

Functions  in $H^p$ are uniquely determined by their boundary values (see \cite[Corollary II.4.2]{Grn}), and as a result, they are often identified with the latter.
 A function $f\in H^p$ is outer, $f\in H^p_O$, if
$$
\log |f(z)|=\int_\bR \log|f(t)|P_y(x-t)\,dt,
$$
where $z=x+iy$ and $P_y(x)=\frac{1}{\pi}\frac{y}{y^2+x^2}$ is the Poisson kernel (see \cite[p. 64]{Grn}).
A matrix function $S\in (H^p)^{r\tm r}$ is outer if its determinant is outer.

If $0\leq f\in L^1(\bR)$ and $\int_\bR\frac{|\log f(t)|}{1+t^2}\,dt\not=\iy$, then the outer spectral factor $f^+\in H^2_O$ can be written as (see, e.g.,
\cite[p. 63]{Grn})
$$
f^+(t)=\exp\left(\frac{1}{2}\log f(t)+i\frac{1}{2} H\big(\log f(t)\big) \right),
$$
where $H$ is the Hilbert transform:
$$
Hh(t)=\lim_{\dt\to 0}\int_{\dt<|t-\tau|}\frac{h(\tau)}{t-\tau}\,d\tau\,.
$$

 Let $\calF:L^2(\bR)\to L^2(\hat{\bR})$ be the Fourier-Plancherel isometric operator:
 $$
 \lim_{A\to\iy}\|\calF(f)(x)-\sqrt{2\pi}^{-1}\int_{-A}^{A} f(t)e^{-ixt}\,dt\|_{L^2}=0, \;\; f\in L^2(\bR),
 $$
 and let
$$
L^2_\pm({\bR}):=\{f\in L^2(\bR): \calF(f)=0 \text{ a.e. on }\bR_\mp \}.
$$
The well-known Paley-Wiener theorem asserts that the Hardy space $H^2$
is isometrically isomorphic to the space $L^2_+(\bR)$.

Let $\bP_\pm:L^2(\bR)\to L^2_\pm(\bR)$ be the usual projection:
 $$\bP_\pm(f)=\calF^{-1}\big(\bI_{\bR_\pm}\calF(f)\big).$$
 Obviously, any $f\in L^2(\bR)$ can be split as
 \begin{equation}\label{f2sum}
   f=\bP_+(f)+\bP_-(f).
 \end{equation}

By the weak convergence of $u_n\in L^\iy(\bR)$, we assume that there exists $u\in L^\iy(\bR)$ such that $\|fu_n-fu\|_{L^2(\bR)}\to 0$ for each $f\in L^2(\bR)$. The convergence of matrix functions $U_n$ means that their entries are convergent.

\section{Preliminary observations }

A key computational ingredient of the proposed algorithm is the constructive proof of the following

\begin{theorem}\label{key1}
	Let $\tau>0$ and $N$ be a positive integer. For any matrix function $F$ of the form
	\begin{equation}\label{F}
F(x)=\begin{pmatrix}1&0&0&\cdots&0&0\\
0&1&0&\cdots&0&0\\
0&0&1&\cdots&0&0\\
\vdots&\vdots&\vdots&\vdots&\vdots&\vdots\\
0&0&0&\cdots&1&0\\
\zeta_{1}(x)&\zeta_{2}(x)&\zeta_{3}(x)&\cdots&\zeta_{m-1}(x)&f^+(x)
\end{pmatrix},	
\end{equation}
where $\zeta_j\in\calP^-_{\tau,N}$, $j=1,2,\ldots,m-1$, and $f^+(x)= \sum\nolimits_{k=0}^N c_ke^{ik\tau x}\in\calP^+_{\tau,N}$ with $c_0\not=0$, there exists a unitary matrix function $U$ of the form
\begin{equation}\label{3.2}
U(x)=\begin{pmatrix}u_{11}(x)&u_{12}(x)&\cdots&u_{1,m-1}(x)&u_{1m}(x)\\
u_{21}(x)&u_{22}(x)&\cdots&u_{2,m-1}(x)&u_{2m}(x)\\
\vdots&\vdots&\vdots&\vdots&\vdots\\
u_{m-1,1}(x)&u_{m-1,2}(x)&\cdots&u_{m-1,m-1}(x)&u_{m-1,m}(x)\\[3mm]
\ol{u_{m1}}(x)&\ol{u_{m2}}(x)&\cdots&\ol{u_{m,m-1}}(x)&\ol{u_{mm}}(x)\\
\end{pmatrix},
\end{equation}
 where  $u_{ij}\in \calP^+_{\tau,N}$, $1\leq i,j\leq m$, such that
 \begin{equation}\label{detU1}
 \det U\equiv1
 \end{equation}
 and
 \begin{equation}\label{FU}
 FU\in\big( \calP^+_{\tau,N}\big) ^{m\tm m}.
 \end{equation}
\end{theorem}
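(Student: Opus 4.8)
\medskip
\noindent\emph{Proof proposal.} The plan is to convert the statement into a question about Laurent polynomials on the unit circle through the substitution $z=e^{i\tau x}$. Under it $\calP^+_{\tau,N}$ becomes the space of polynomials in $z$ of degree $\le N$, $\calP^-_{\tau,N}$ the polynomials in $z^{-1}$ of degree $\le N$, and the operation $p\mapsto\ol p$ becomes para-conjugation $p(z)\mapsto\ol{p(1/\ol z)}$, which agrees with complex conjugation on $|z|=1$. Since the exponentials $e^{ik\tau x}$ are linearly independent, this is a faithful dictionary: the matrix \eqref{F} becomes exactly the one-nontrivial-row matrix of the Janashia--Lagvilava lemma, the membership $u_{ij}\in\calP^+_{\tau,N}$ becomes analyticity together with a degree bound, and unitarity of $U(x)$ for every $x\in\bR$ becomes unitarity of the $z$-matrix on $|z|=1$. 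It therefore suffices to construct $U$ in the polynomial picture, and I would carry out the construction directly in the $\calP^\pm_{\tau,N}$ notation, reproducing only the steps that differ from the discrete case.

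First I would isolate the binding conditions. Because the first $m-1$ rows of $F$ are standard basis vectors, the first $m-1$ rows of $FU$ coincide with those of $U$ and hence already lie in $\calP^+_{\tau,N}$; the requirement \eqref{FU} thus reduces to analyticity of the single bottom row
$$
(FU)_{mj}=\sum_{k=1}^{m-1}\zeta_k\,u_{kj}+f^+\,\ol{u_{mj}},\qquad j=1,\dots,m.
$$
Every summand has frequencies in $\{-N,\dots,N\}$, so the upper degree bound is automatic and only the coefficients of $e^{-i\tau x},\dots,e^{-iN\tau x}$ must vanish. Writing these out, the most negative frequency $-N\tau$ couples the top coefficient of $\ol{u_{mj}}$ to the constant coefficients of $u_{1j},\dots,u_{m-1,j}$ through $c_0$; the next frequency couples the next coefficients, and so on. Because $c_0\neq0$, this is a triangular system that can be solved recursively for the coefficients of the bottom row in terms of the remaining entries. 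This is the only point at which the hypothesis $c_0\neq0$ is used, and it is precisely what makes the elimination possible.

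The real obstacle is that unitarity together with $\det U\equiv1$ is nonlinear and cannot be imposed on top of the linear analyticity relations by inspection. The mechanism that overcomes this --- the core of the Janashia--Lagvilava construction --- is to use the para-conjugate placement of the last row and to build $U$ by induction on $N$ as a product $U=V_1V_2\cdots V_N$ of explicit elementary special-unitary factors with entries in $\calP^+_{\tau,1}$, the $s$-th factor clearing one further negative frequency in the bottom row without disturbing the ones already removed. For one such factor the analyticity relation fixes its top coefficients, the para-conjugate structure forces the ``mixed'' inner products in $UU^*$ (the coefficients of $e^{\pm is\tau x}$, $s\ge1$) to cancel automatically, and full unitarity then collapses to a single positive normalization that is always solvable. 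Each factor is special-unitary --- a unitary Laurent matrix has $\det$ unimodular on $|z|=1$, hence equal to a monomial, and the structure pins that monomial to $1$ --- so the product has $\det U\equiv1$, giving \eqref{detU1}; the base case $N=0$ is trivial since then $F$ is constant and $U=I_m$ satisfies \eqref{detU1}--\eqref{FU}.

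I expect the main difficulty to lie exactly in verifying that each elementary factor simultaneously lowers the co-analytic degree of the bottom row by one, is exactly unitary, and has determinant one, so that the telescoping product inherits all three properties and lands in $\big(\calP^+_{\tau,N}\big)^{m\times m}$. Everything else coincides formally with the discrete Janashia--Lagvilava setting and would not need to be repeated.
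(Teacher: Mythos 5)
Your first step---rescaling to $\tau=1$ and transplanting the problem to the unit circle via $z=e^{i\tau x}$, under which $\calP^\pm_{\tau,N}$ become the analytic/co-analytic polynomials of degree $\le N$ and $p\mapsto\ol p$ becomes para-conjugation---is exactly the paper's proof: the paper observes that after this identification the statement \emph{is} Theorem 1 of \cite{IEEE2011}, whose proof applies verbatim, and stops there. Your observation that \eqref{FU} reduces to killing the negative frequencies of the bottom row, and that $c_0\neq 0$ makes the resulting system triangular in the coefficients of $u_{mj}$, is also sound and consistent with that proof. The gap is in the mechanism you propose for the genuinely hard part (simultaneous unitarity, $\det U\equiv 1$, and the degree bound), which you yourself defer and which is not the mechanism of \cite{IEEE2011}.

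The inductive factorization $U=V_1V_2\cdots V_N$ into ``elementary special-unitary factors with entries in $\calP^+_{\tau,1}$'' cannot work as stated. A matrix function that is unitary on $\bR$, has all entries in $\calP^+_{\tau,k}$ for some $k$, and has determinant $\equiv 1$ is necessarily \emph{constant}: its inverse equals its adjugate (entries analytic trigonometric polynomials) and also equals its Hermitian conjugate (entries co-analytic), and a trigonometric polynomial with only non-negative and only non-positive frequencies is constant; hence $V^{-1}$, and so $V$, is constant. This is why the classical elementary paraunitary factors $I-vv^*+e^{i\tau x}vv^*$ have determinant $e^{i\tau x}$, not $1$. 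If instead you take each $V_s$ of the mixed form \eqref{3.2}, the class of such matrices is not closed under multiplication: for $i\le m-1$ the entry $(V_sV_{s+1})_{ij}$ contains the term $(V_s)_{im}(V_{s+1})_{mj}$, a product of an analytic and a co-analytic polynomial, so the telescoping product need not have form \eqref{3.2} at all, and nothing in your sketch restores it. The three properties you leave to be ``verified'' for each factor are precisely the content of the theorem. The proof the paper invokes is global, not inductive: it sets up one linear system for all coefficients of all $u_{ij}$ (the analyticity conditions), shows it has sufficiently many solutions, proves that for any two solution vectors the combination $\sum_{k=1}^{m-1}u_k^{(j)}\ol{u_k^{(l)}}+\ol{u_m^{(j)}}\,u_m^{(l)}$ is a constant, and then obtains unitarity and $\det U\equiv 1$ by a constant linear-algebra normalization of a basis of solutions; no degree-one factorization appears anywhere, and your route would require a new argument that you have not supplied.
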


A simple change of the variable $x=\tau x$ reveals that without loss of the generality the theorem is sufficient to be proved in the case $\tau=1$. On the other hand, if $\tau=1$, we can represent $2\pi$ periodic functions on the line as functions on the unit circle, and then the theorem is formally the same as Theorem 1 in \cite{IEEE2011}. Therefore, the proof of Theorem 1 given in \cite{IEEE2011} applies verbatim in this context and we do not repeat it here.

We need a slight modification of Theorem 1 which follows. Note that
\begin{equation}\label{33m}
{\mathbf e}_\tau(x)=\frac{i}{\sqrt{2\pi}}\frac{1-e^{i\tau x}}{x}
 \end{equation}
is the inverse Fourier-Plancherel transform of $\chi_\tau=\bI_{[0,\tau)}$, i.e., ${\mathbf e}_\tau=\calF^{-1}(\chi_\tau)$.

\begin{theorem}\label{key2}
	Let $\tau>0$ and $N$ be a positive integer. For any matrix function $F$ of the form
	\begin{equation}\label{F}
	F=\begin{pmatrix}1&0&0&\cdots&0&0\\
	0&1&0&\cdots&0&0\\
	0&0&1&\cdots&0&0\\
	\vdots&\vdots&\vdots&\vdots&\vdots&\vdots\\
	0&0&0&\cdots&1&0\\
	\zeta_{1}{\mathbf e}_\tau&\zeta_{2}{\mathbf e}_\tau&\zeta_{3}{\mathbf e}_\tau&\cdots&\zeta_{m-1}{\mathbf e}_\tau&f^+{\mathbf e}_\tau
	\end{pmatrix},	
	\end{equation}
	where $\zeta_j\in\calP^-_{\tau,N}$, $j=1,2,\ldots,m-1$, and $f^+(x)= \sum\nolimits_{k=0}^N c_ke^{ik\tau x}\in\calP^+_{\tau,N}$ with $c_0\not=0$, there exists a unitary matrix function $U$ of the form \eqref{3.2}
	where  $u_{ij}\in \calP^+_{\tau,N}$, $1\leq i,j\leq m$, such that
\eqref{detU1} holds and
	\begin{equation}\label{FU}
	FU\in\big( L^2_+(\bR)\big) ^{m\tm m}.
	\end{equation}
\end{theorem}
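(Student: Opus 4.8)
The plan is to deduce Theorem~\ref{key2} directly from the already-established Theorem~\ref{key1}, the only genuinely new ingredient being the passage from the trigonometric-polynomial conclusion of Theorem~\ref{key1} to an $L^2_+(\bR)$ conclusion, effected by the single scalar factor $\mathbf{e}_\tau$. First I would write $F=\mathbf{e}_\tau\,\wdt F$, where $\wdt F$ is the matrix of Theorem~\ref{key1} formed from the same data $\zeta_1,\dots,\zeta_{m-1}$ and $f^+$ (so that each entry of $F$ is $\mathbf{e}_\tau$ times the corresponding entry of $\wdt F$). Applying Theorem~\ref{key1} to $\wdt F$ produces a unitary $U$ of the prescribed form \eqref{3.2} with $u_{ij}\in\calP^+_{\tau,N}$, satisfying $\det U\equiv1$ as in \eqref{detU1} and $\wdt F\,U\in(\calP^+_{\tau,N})^{m\tm m}$. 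I would keep this same $U$. Since $\mathbf{e}_\tau$ is a scalar, $FU=\mathbf{e}_\tau\,(\wdt F\,U)$, and the unitarity of $U$, the normalization \eqref{detU1}, and the structural form \eqref{3.2} depend only on $U$; they are therefore inherited verbatim from Theorem~\ref{key1} and require no further argument. Thus the statement reduces to showing that $FU$ has all of its entries in $L^2_+(\bR)$.

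That in turn reduces to the single claim
$$
\mathbf{e}_\tau\,p\in L^2_+(\bR)\qquad\text{for every }p\in\calP^+_{\tau,N},
$$
which is the crux of the argument and rests on the Fourier--Plancherel transform. By \eqref{33m} one has $\calF(\mathbf{e}_\tau)=\chi_\tau=\bI_{[0,\tau)}$, so $\mathbf{e}_\tau\in L^2(\bR)$ with spectrum contained in $\bR_+$. For a single exponential $e^{ik\tau x}$, $0\le k\le N$, modulation becomes translation on the Fourier side, giving
$$
\calF\big(e^{ik\tau x}\,\mathbf{e}_\tau\big)(\xi)=\chi_\tau(\xi-k\tau)=\bI_{[k\tau,(k+1)\tau)}(\xi).
$$
Writing $p=\sum_{k=0}^N c_k e^{ik\tau x}$ and summing, I find that $\calF(\mathbf{e}_\tau\,p)=\sum_{k=0}^N c_k\,\bI_{[k\tau,(k+1)\tau)}$ is a step function supported in $[0,(N+1)\tau)\subset\bR_+$; it plainly lies in $L^2(\hat{\bR})$ and vanishes a.e.\ on $\bR_-$, so $\mathbf{e}_\tau\,p\in L^2_+(\bR)$.

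Combining the two steps, every entry of $FU$ has the form $\mathbf{e}_\tau\,q$ with $q\in\calP^+_{\tau,N}$, hence lies in $L^2_+(\bR)$, which is exactly \eqref{FU}; the remaining conclusions come for free from Theorem~\ref{key1}. The main point demanding care is the Fourier bookkeeping in the second paragraph: one must justify the modulation--translation identity directly at the level of the Fourier--Plancherel operator, since the modulating factors $e^{ik\tau x}$ are not themselves integrable, and then track the supports to confirm that the spectrum of each $\mathbf{e}_\tau\,p$ lands in $[0,(N+1)\tau)$ and in particular avoids $\bR_-$. Beyond this, no new difficulty arises, as all the algebraic content—the unitarity, the explicit form \eqref{3.2}, and $\det U\equiv1$—is transported unchanged from Theorem~\ref{key1}.
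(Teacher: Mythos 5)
Your proposal is correct and follows essentially the same route as the paper, which also deduces Theorem~\ref{key2} directly from Theorem~\ref{key1} by keeping the same $U$ and observing that multiplying the polynomial entries of $\wdt F U$ by the scalar ${\mathbf e}_\tau$ lands in $L^2_+(\bR)$. The only difference is cosmetic: the paper justifies this last fact in one line (functions in $\calP^+_{\tau,N}$ are bounded and ${\mathbf e}_\tau\in L^2_+(\bR)$), whereas you verify it by the explicit modulation--translation computation $\calF\big(e^{ik\tau x}{\mathbf e}_\tau\big)=\bI_{[k\tau,(k+1)\tau)}$, which is a slightly more detailed but equivalent check.
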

Theorem \ref{key2} is an immediate consequence of Theorem \ref{key1} if we observe that the functions from $\calP^+_{\tau,N}$ are bounded and ${\mathbf e}_\tau\in L^2_+(\bR)$.

\section{Auxiliary statements } In order to prove the convergent properties of the algorithm, we need the following lemmas

 \begin{lemma}
Let $f\in L^2_+(\bR)$. Then	the Hankel-type operator $H_f:\calP^-\to L^2_+(\bR)$ defined by
 	$$H_f(h)=\bP_+(fh)$$
is a compact operator, i.e. if $h_n\in\calP^-$ is such a sequence that
\begin{equation}\label{hn}
\|h_n\|_\iy\leq 1 \text{ for each } n=1,2,\ldots,
\end{equation}
 then there exists a subsequence $n_j$, $j=1,2,\ldots,$ such that $H_f(h_{n_j})$ is convergent in $L^2$ norm.
 \end{lemma}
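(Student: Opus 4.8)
The plan is to establish compactness through the standard sequential criterion, exactly as phrased in the statement: I must show that every sequence $h_n\in\calP^-$ with $\|h_n\|_\iy\le1$ has a subsequence along which $H_f(h_n)$ converges in $L^2$. Since each element of $\calP^-$ is the boundary value of a bounded analytic function in $\bC_-$, and the unit ball of $H^\iy(\bC_-)$ is weak-$*$ closed in $L^\iy(\bR)=(L^1(\bR))^*$, the sequential Banach--Alaoglu theorem (using separability of $L^1(\bR)$) produces a subsequence $h_{n_j}$ converging weak-$*$ to some $h$ with $\|h\|_\iy\le1$ and $\calF(h)$ supported in $(-\iy,0]$. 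Setting $v_j=h_{n_j}-h$, we have $\|v_j\|_\iy\le2$, $v_j\to0$ weak-$*$, and $\calF(v_j)$ supported in $(-\iy,0]$. Since $H_f(h_{n_j})=\bP_+(fh)+\bP_+(fv_j)$, the whole problem reduces to proving $\|\bP_+(fv_j)\|_{L^2}\to0$.

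The next step is to replace $f$ by a smooth, spectrally compact symbol. Because $f\in L^2_+(\bR)$, its transform $\calF(f)$ lies in $L^2([0,\iy))$, hence can be approximated there by functions in $C_c^\iy((0,\iy))$; so for every $\ep>0$ there is a Schwartz function $g$ with $\calF(g)$ supported in a fixed interval $(0,A)$ and $\|f-g\|_{L^2}<\ep$. As $\bP_+$ is a norm-one projection on $L^2$ and $\|v_j\|_\iy\le2$, the error is controlled uniformly in $j$: $\|\bP_+((f-g)v_j)\|_{L^2}\le\|f-g\|_{L^2}\|v_j\|_\iy\le2\ep$. Thus it suffices to prove $\|\bP_+(gv_j)\|_{L^2}\to0$ for the smooth symbol $g$, and then let $\ep\to0$.

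The heart of the argument is a dominated-convergence estimate carried out on the frequency side. Since $g$ is Schwartz and $v_j\in L^\iy(\bR)$, the product $gv_j$ lies in $L^1(\bR)$, so $\calF(gv_j)(\xi)=\tfrac{1}{\sqrt{2\pi}}\int g(x)v_j(x)e^{-i\xi x}\,dx$ is a genuine continuous function. It is uniformly bounded, $|\calF(gv_j)(\xi)|\le\tfrac{1}{\sqrt{2\pi}}\|g\|_{L^1}\|v_j\|_\iy$, and for each fixed $\xi$ it tends to $0$, because $x\mapsto g(x)e^{-i\xi x}$ belongs to $L^1(\bR)$ and $v_j\to0$ weak-$*$. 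Moreover $\calF(gv_j)=\calF(g)*\calF(v_j)$ is supported in $(0,A)+(-\iy,0]=(-\iy,A]$, so after projection $\calF(\bP_+(gv_j))=\bI_{[0,\iy)}\calF(gv_j)$ is supported in the \emph{fixed} compact set $[0,A]$. By Plancherel, $\|\bP_+(gv_j)\|_{L^2}^2=\int_0^A|\calF(gv_j)(\xi)|^2\,d\xi$, where the integrand is dominated on $[0,A]$ by the constant $\tfrac{1}{2\pi}\|g\|_{L^1}^2\|v_j\|_\iy^2$ and tends to $0$ pointwise; dominated convergence then yields $\|\bP_+(gv_j)\|_{L^2}\to0$, completing the reduction.

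I expect the main obstacle to be conceptual rather than computational: spectral compactness alone does not suffice, since a set of $L^2$ functions that is merely bounded and spectrally supported in $[0,A]$ need not be precompact. Two structural features are what make the argument work and deserve emphasis. First, the weak-$*$ convergence of $v_j$ forces the \emph{pointwise} decay of $\calF(gv_j)$ in frequency. Second, the one-sided spectrum of the admissible $h$, inherited by $v_j$, keeps the spectrum of $gv_j$ bounded above, so that after applying $\bP_+$ everything is confined to the single compact interval $[0,A]$; only the combination of these two facts licenses the dominated-convergence step. The one remaining point requiring care is the verification that the $H^\iy(\bC_-)$ unit ball is weak-$*$ closed, i.e. that the one-sided spectral condition persists under weak-$*$ limits; this follows by testing against $L^1$ functions whose transforms are supported in $(0,\iy)$.
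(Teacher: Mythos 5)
Your proof is correct, and it takes a genuinely different route from the paper's. Both arguments start the same way: the uniform bound $\|h_n\|_\iy\le 1$ is used to replace $f$ by a symbol with compact spectrum (the paper uses the sharp cut-off $f_T=\calF^{-1}(\bI_{[0,T]}\calF(f))$ where you use a smooth $g$; either works). From there the paper stays on the physical side: it splits each $h_n=h_n^-+h_n^+$, discards $h_n^-$ (whose frequencies lie below $-b$, so $\bP_+(fh_n^-)=0$), applies Bernstein's inequality to the remaining polynomials with spectrum in $[-b,0]$ to get the derivative bound $\|(h_n^+)'\|_\iy\le b\,\|h_n^+\|_\iy$, extracts a pointwise convergent subsequence by the Cantor diagonal argument, and finishes by dominated convergence in $x$ with majorant $|f|$. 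You never modify $h_n$ at all: you extract a weak-$*$ limit $h$ by sequential Banach--Alaoglu, reduce the lemma to showing that $H_f$ maps the weak-$*$ null sequence $v_j$ to a norm-null sequence, and then run dominated convergence on the frequency side over the fixed window $[0,A]$, where pointwise decay of $\calF(gv_j)$ comes from weak-$*$ convergence and the confinement to $[0,A]$ comes from the one-sided spectra. What your route buys: it sidesteps the splitting step entirely, which is a real advantage, because frequency truncation is not a contraction on $L^\iy$, so the paper's tacit assumption that $h_n^+$ inherits the uniform sup-norm bound (needed there for Bernstein, for the diagonal extraction, and for the final majorant) is a subtlety your argument simply never meets; you also identify the limit explicitly as $\bP_+(fh)$. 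What the paper's route buys: it is more elementary and constructive in flavor (no Banach--Alaoglu, no distributional convolution/support theorem), in keeping with the algorithmic spirit of the paper. Your closing remarks correctly diagnose why boundedness plus compact spectral support alone would not give precompactness, and your sketch of why the one-sided spectral condition survives weak-$*$ limits (testing against $L^1$ functions with transforms supported in $(0,\iy)$) is the right justification.
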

\begin{proof}
	Let $h_n\in\calP^-$ be a sequence such that \eqref{hn} holds. We have to show that there exists a subsequence $n_j$ such that $\bP_+(fh_{n_j})$ is a Cauchy sequence in $L^2_+(\bR)$.
	
	Since $\|f_T-f\|_{L^2}\to 0$ as $T\to\iy$, where
	$$
	f_T=\calF^{-1}\big(\bI_{[0,T]}\calF(f)\big),
	$$
and $\|fh_n-fh_m\|_{L^2}=\|(f-f_T)(h_n-h_m)+f_T(h_n-h_m)\|_{L^2}\leq
2\|f-f_T\|_{L^2}+\|f_T(h_n-h_m)\|_{L^2}$, we can assume in the proof of the lemma that $\calF(f)$ has a compact support. Suppose
\begin{equation}\label{spp}
{support}\big(\calF(f)\big) \sbs[0,b], \text{ where }b>0.
 \end{equation}
 Split each trigonometric polynomial $h_n$ into two parts
\begin{equation}\label{spl}
h_n(x)=h_n^-(x)+h_n^+(x)=\sum\nolimits_{\{k:\lb_k<-b\}} c_ke^{i\lb_kx}
+\sum\nolimits_{\{k:\lb_k\ge -b\}} c_ke^{i\lb_kx}.
\end{equation}
Because of \eqref{spp}, we have $fh_n^-\in L^2_-(\bR)$ and therefore $\bP_+(fh_n^-)=0$. Thus $H_f(h_n)=H_f(h_n^+)$ and we can assume that each $\lb_k\geq -b$ in the representation \eqref{spl} of $h_n$, $n=1,2,\ldots$. It means that if we look now at $h_n$ as distributions and consider their (generalized) Fourier transforms, then their support will be contained in $[-b,0]$ (see, e.g., \cite[Ch. VI, \S4]{Katz}). Thus we can apply the Bernstein inequality to conclude that
\begin{equation}\label{Brn}
\|h'_n\|_\iy\leq b \|h_n\|_{\iy}, \;\;n=1,2,\ldots,
\end{equation}
(see, e.g., \cite[Ch. VI, Ex. 4.14]{Katz}). The Cantor diagonal method guarantees that we can construct a subsequence $n_j$ such that $h_{n_j}(x)$ is convergent for each rational $x$ and the restriction \eqref{Brn} on smooth functions $h_n$ provides that $h_{n_j}(x)$ will be convergent for each $x$. Consequently $fh_{n_j}$ will be convergent a.e. with square integrable majorant $f$, and therefore it will be convergent in  $L^2$, which implies the convergence of $H_f(h_{n_j})=\bP_+( fh_{n_j})$.
\end{proof}

The above lemma can be used to provide a constructive proof of the following theorem, whose discrete counterpart is the core of the Janashia-Lagvilava algorithm.

\begin{theorem}\label{Th3}
  Let $F$ be an $m\tm m$ matrix function of the form \eqref{F}, where $\zeta_j\in L^2_-(\bR)$, $j=1,2,\ldots,m-1$, and $f^+\in H^2_O(\bR)$. Then there exists a unitary matrix function of the form \eqref{3.2}, where
  \begin{equation}\label{uij3}
    u_{ij}\in L^\iy_+(\bR),\;\;\;1\leq i,j\leq m,
  \end{equation}
  such that \eqref{detU1} and \eqref{FU} hold. Furthermore, each function in \eqref{uij3} can be constructed approximately.
\end{theorem}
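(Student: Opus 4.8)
The plan is to realize $U$ as a subsequential limit of the explicit unitary matrices produced by Theorem~\ref{key2}. First I would approximate the data. Since $\calF(\zeta_j)$ is supported in $\bR_-$, $\calF(f^+)$ in $\bR_+$, and $\calF(e^{\mp ik\tau x}{\mathbf e}_\tau)$ is the indicator of an interval of length $\tau$ adjacent to the origin, the families $\{\tilde\zeta\,{\mathbf e}_\tau:\tilde\zeta\in\calP^-_{\tau,N}\}$ and $\{\tilde f\,{\mathbf e}_\tau:\tilde f\in\calP^+_{\tau,N}\}$ become dense in $L^2_-(\bR)$ and $L^2_+(\bR)$ as $\tau\to0$, $N\tau\to\iy$. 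Hence I can choose $\tau_n\to0$, $N_n\to\iy$, and polynomials $\zeta_j^{(n)}\in\calP^-_{\tau_n,N_n}$, $f^{+(n)}\in\calP^+_{\tau_n,N_n}$ (with nonzero constant term, available because the outer function $f^+$ does not vanish) so that the matrices $F_n$ with last row $(\zeta_1^{(n)}{\mathbf e}_{\tau_n},\dots,\zeta_{m-1}^{(n)}{\mathbf e}_{\tau_n},f^{+(n)}{\mathbf e}_{\tau_n})$ satisfy $\|(F_n)_{ij}-F_{ij}\|_{L^2}\to0$ for every $i,j$. Theorem~\ref{key2} then supplies, for each $n$, a unitary $U_n$ of the form \eqref{3.2} with entries $u_{ij}^{(n)}\in\calP^+_{\tau_n,N_n}$, with $\det U_n\equiv1$, and with $F_nU_n$ analytic.

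Second, I would extract the limit by a normal-families argument. Each column of $U_n$ is a pointwise unit vector, so $|u_{ij}^{(n)}(x)|\le1$ a.e.; as $u_{ij}^{(n)}$ is analytic, the maximum principle gives $|u_{ij}^{(n)}(z)|\le1$ on $\bC_+$. Passing to a subsequence, $u_{ij}^{(n)}\to u_{ij}$ uniformly on compact subsets of $\bC_+$, with $u_{ij}\in H^\iy=L^\iy_+(\bR)$ and $\|u_{ij}\|_\iy\le1$; this yields \eqref{uij3}, and since the structure of \eqref{3.2} is inherited entrywise, the correct form of $U$. The algebraic conclusions then pass to the limit cleanly: $\det U_n\to\det U$ uniformly on compacts forces $\det U\equiv1$, i.e.\ \eqref{detU1}. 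For unitarity I would argue through singular values rather than weak limits of products: for $z\in\bC_+$ and unit vectors $v,w$ the scalar $\la U_n(z)v,w\ra$ is an $H^\iy$ function bounded by $1$ on $\bR$, hence $\|U_n(z)\|\le1$ (operator norm) and therefore $\|U(z)\|\le1$; by Fatou $\|U(x)\|\le1$ a.e., while $|\det U(x)|=1$ a.e. Since every singular value of $U(x)$ is at most $1$ and their product has modulus $1$, all singular values equal $1$, so $U(x)$ is unitary for a.e.\ $x$.

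The analyticity \eqref{FU} of $FU$ is the main obstacle, because on the boundary $u_{ij}^{(n)}\to u_{ij}$ only weakly$^{*}$, and a product of an $L^2$ factor with a weakly$^{*}$ convergent $L^\iy$ factor need not converge strongly. Its first $m-1$ rows coincide with the analytic entries $u_{ij}$, so only the last row is at issue. Here I would exploit that $\bP_-\big((F_nU_n)_{mj}\big)=0$ for every $n$ while $\bP_-\big((FU)_{mj}\big)$ is independent of $n$, writing $\bP_-\big((FU)_{mj}\big)=\bP_-\big((FU)_{mj}-(F_nU_n)_{mj}\big)$. Since $\zeta_k^{(n)}{\mathbf e}_{\tau_n}\to\zeta_k$ and $f^{+(n)}{\mathbf e}_{\tau_n}\to f^+$ strongly in $L^2$, whereas $u_{kj}^{(n)}$ and $\ol{u_{mj}^{(n)}}$ converge weakly$^{*}$ and boundedly, each product $\zeta_k^{(n)}{\mathbf e}_{\tau_n}u_{kj}^{(n)}$ and $f^{+(n)}{\mathbf e}_{\tau_n}\ol{u_{mj}^{(n)}}$ converges weakly in $L^2$ to $\zeta_ku_{kj}$ and $f^+\ol{u_{mj}}$, respectively. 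Thus $(FU)_{mj}-(F_nU_n)_{mj}\to0$ weakly, and as $\bP_-$ is weak-weak continuous the right-hand side tends weakly to $0$; being a constant sequence, $\bP_-\big((FU)_{mj}\big)$ must vanish, which is \eqref{FU}.

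Finally, the role of the preceding compactness lemma is to make the construction effective and the convergence strong, which is what the last assertion requires. The entries $u_{ij}^{(n)}$ are delivered by the explicit (constructive) proof of Theorem~\ref{key2}, hence of Theorem~\ref{key1}, so they are computable. To see that they genuinely approximate $u_{ij}$ in the strong sense used by the overall algorithm, I would recast the relevant parts of the last row through the Hankel-type operators $H_{f^+}(\ol{u_{mj}^{(n)}})=\bP_+\big(f^+\,\ol{u_{mj}^{(n)}}\big)$ and the conjugate analogues $\ol{H_{\ol{\zeta_k}}(\ol{u_{kj}^{(n)}})}$: because $\ol{u_{mj}^{(n)}},\ol{u_{kj}^{(n)}}\in\calP^-$ are uniformly bounded and $f^+,\ol{\zeta_k}\in L^2_+(\bR)$, the lemma renders these sequences precompact, so the weak$^{*}$ convergence $\ol{u_{mj}^{(n)}}\to\ol{u_{mj}}$ upgrades to strong $L^2$ convergence of their Hankel images. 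This removes the only non-strongly-convergent contributions and shows that the computable $u_{ij}^{(n)}$ approximate $u_{ij}$, so each function in \eqref{uij3} can indeed be constructed approximately.
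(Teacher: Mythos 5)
Your skeleton --- approximate the data so that Theorem \ref{key2} applies, take the explicit unitaries $U_n$ it produces, and pass to a limit --- is also the paper's skeleton, and your first and third paragraphs are essentially sound (the paper realizes your density claim by explicitly averaging $\calF(\zeta_j)$ and $\calF(f^+)$ over a grid of mesh $1/N$, and your strong-times-weak$^*$ argument for \eqref{FU} is fine). The genuine gap is in your second paragraph: every argument there treats $U_n$ as an analytic matrix-valued function on $\bC_+$, and it is not one. In the form \eqref{3.2} the last row consists of the conjugate functions $\ol{u_{mj}^{(n)}}\in\calP^-$, which are analytic in $\bC_-$ and grow exponentially in $\bC_+$. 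Hence $\la U_n(\cdot)v,w\ra$ is the sum of a $\calP^+$ function and a $\calP^-$ function, not an $H^\iy$ function (unless $w_m=0$), and $\det U_n$ is likewise not analytic in $\bC_+$; Montel applied to the individual entries $u_{ij}^{(n)}$ (which is legitimate) tells you nothing about $\det U_n$ or $\|U_n(z)\|$ in $\bC_+$. On the boundary, where the identities $\det U_n\equiv 1$ and $U_n^*U_n=I_m$ actually live, you only have weak$^*$ convergence of the entries, and these identities are multilinear in the entries, so they do not pass to the limit.

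This is not a repairable technicality: soft limiting arguments cannot yield unitarity or \eqref{detU1}. For $m=2$ the matrices
$$
U_n(x)=\frac{1}{\sqrt2}\begin{pmatrix} e^{inx} & e^{inx}\\ -e^{-inx} & e^{-inx}\end{pmatrix}
$$
are unitary, are exactly of the form \eqref{3.2} with $u_{ij}^{(n)}\in\calP^+$, and satisfy $\det U_n\equiv 1$, yet all their entries tend to $0$ both weak$^*$ on $\bR$ and locally uniformly in $\bC_+$, so your limiting procedure returns $U=0$: not unitary and of determinant $0$, even though \eqref{FU} holds trivially for it. What excludes such degeneration for the matrices delivered by Theorem \ref{key2} is their coupling to $F_n$ through the analyticity of $F_nU_n$, and exploiting that coupling requires upgrading weak$^*$ convergence to strong $L^2$ convergence of the relevant products, such as $\bP_+\big(f^+\ol{u_{mj}^{(n)}}\big)$ and $\bP_-\big(\zeta_k u_{kj}^{(n)}\big)$. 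That is exactly the role of Lemma 1: the paper establishes the convergence of $U_N$ --- unitarity of the limit included --- by the discrete-case argument of \cite[Theorem 2]{EJL11} ``with due consideration to Lemma 1.'' In your proposal Lemma 1 enters only at the very end, to justify the ``constructed approximately'' clause; it must instead be the engine of the existence proof itself, i.e., of unitarity and \eqref{detU1} for the limit matrix.
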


\begin{proof}
  We begin by outlining the procedure for the approximate construction of functions $u_{ij}$. For a large positive integer $N$, let $\zeta_j^{[N]}$ be the function defined by the equation
  $$
  \zeta_j^{[N]}=\sum_{k=0}^{N^2-1}\calF^{-1}\left(\calI_{[-N+\frac{k}{N}, -N+\frac{k+1}{N})}\big[\calF(\zeta_j)\big] \right)
  $$
  and suppose
  $$
  f^+_{[N]}=\sum_{k=0}^{N^2-1}\calF^{-1}\left(\calI_{[\frac{k}{N}, \frac{k+1}{N})}\big[\calF(f^+)\big] \right)
  $$
where, for locally integrable function $h\in L^1_{loc}(\bR)$,
  $$
  \calI_{[a,b)}[h](x)=\frac{1}{b-a}\int_a^b h(t)\,dt \text{ for } x\in[a,b) \text{ and } =0  \text{ for } x\not\in[a,b).
  $$
The function $\sum_{k=0}^{N^2-1}\calI_{[-N+\frac{k}{N}, -N+\frac{k+1}{N})}\big[\calF(\zeta_j)\big]$ can be represented as a linear combination of translations  $T_{k\tau}(\bI_{[0,\tau)})$, $k\in\bZ$, where $\tau=1/N$. Therefore, the function $\zeta_j^{[N]}$ has the form (see \eqref{33m})
$$
\zeta_j^{[N]}(t)=\sum_{k=0}^{N^2-1} c_{jk}e^{-ik\tau t}{\mathbf e}_{\tau}(t)
$$
and also
$$
f^+_{[N]}(t)=\sum_{k=0}^{N^2-1} c_{k}e^{ik\tau t}{\mathbf e}_{\tau}(t).
$$
Consequently, applying Theorem 2, we can construct the desired $U=U_N$.

We establish the weak convergence of $U_N$, as $N\to\infty$, employing a similar proof technique as in the discrete case (see \cite[Theorem 2]{EJL11}), with due consideration to Lemma 1.
\end{proof}

\section{Description of the algorithm}
Let $S\in (L^1(\bR))^{r\tm r}$ be a positive definite (a.e.) matrix function which satisfies \eqref{PW}. As in the Janashia-lagvilava method, we start with lower-upper triangular factorization
\begin{equation*}\label{SMM}
  S(t)=M(t)M^*(t),
\end{equation*}
where
\begin{equation*}\label{TrM}
  M(t)=\begin{pmatrix}f^+_1(t)&0&\cdots&0&0\\
\xi_{21}(t)&f^+_2(t)&\cdots&0&0\\
\vdots&\vdots&\vdots&\vdots&\vdots\\
\xi_{r-1,1}(t)&\xi_{r-1,2}(t)&\cdots&f^+_{r-1}(t)&0\\
\xi_{r1}(t)&\xi_{r2}(t)&\cdots&\xi_{r,r-1}(t)&f^+_r(t)
\end{pmatrix}.
\end{equation*}
Here, the diagonal entries $f^+_i$ are spectral factors of the corresponding positive functions and their construction follows a similar procedure as described in \cite{IEEE2018}.

The spectral factor $S^+$ can be represented as the product
\begin{equation}\label{S3}
S^+(t)=M(t)\mathbf{U}_2(t)\mathbf{U}_3(t)\ldots\mathbf{U}_r(t),
\end{equation}
where each matrix $\mathbf{U}_m$ is unitary and has the following block matrix form
\begin{equation}\label{UB}
\mathbf{U}_m(t)=\begin{pmatrix}U_{m}(t)&0_{m\tm (r-m)}\\0_{(r-m)\tm m}&I_{r-m}\end{pmatrix},\;\;m=2,3,\ldots,r.
\end{equation}
Furthermore, the matrices $U_m$ have the special structure \eqref{3.2}, where each $u_{ij}\in L^\iy_+$.
Matrices \eqref{UB} are constructed recursively in such a way that $[M_m]_{m\tm m}=:[S]_{m\tm m}^+$ is a spectral factor of $[S]_{m\tm m}$, where
\begin{equation*}\label{75}
M_m=M\mathbf{U}_2\mathbf{U}_3\ldots\mathbf{U}_m.
\end{equation*}
 Indeed, let us assume that $\mathbf{U}_2, \mathbf{U}_3, \ldots,\mathbf{U}_{m-1}$ are already constructed so that
\begin{equation}\label{SMMm1}
  [S(t)]_{(m-1)\tm (m-1)}=[M_{m-1}(t)]_{(m-1)\tm (m-1)}[{M_{m-1}^*}(t)]_{(m-1)\tm (m-1)}.
\end{equation}
Since matrices \eqref{UB} are paraunitary and they have the special structure, we also have
$$
[S(t)]_{m\tm m}=[M_{m-1}(t)]_{m\tm m}[M_{m-1}^*(t)]_{m\tm m}.
$$
Furthermore, the matrix $[M_{m-1}(t)]_{m\tm m}$ has the form
\begin{gather}\label{Mmm3}
[M_{m-1}]_{m\tm m}=\left[\begin{matrix}& &  S_{(m-1)\tm (m-1)}^+& & \begin{matrix}0_{(m-1)\tm 1}\end{matrix}\\
\zeta_1 & \zeta_2 & \ldots& \zeta_{m-1}& f_m^+
\end{matrix}\right] =
\\
 \notag
\left[\begin{matrix}& &  S_{(m-1)\tm (m-1)}^+& & \begin{matrix}0_{(m-1)\tm 1}\end{matrix}\\
\phi^+_1 & \phi^+_2 & \ldots& \phi^+_{m-1}& 1
\end{matrix}\right] \left[\begin{matrix}& &  I_{m-1}& & \begin{matrix}0_{(m-1)\tm 1}\end{matrix}\\
\phi^-_1 & \phi^-_2 & \ldots& \phi^-_{m-1}& f_m^+
\end{matrix}\right],
\end{gather}
where $$\zeta_j=\phi^+_j+\phi^-_j,\;\; j=1,2,\ldots,m-1,$$
 is the decomposition of a function $\zeta_j\in L^2(\bR)$ according to the rule \eqref{f2sum}.

 Assuming now that $F$ is the last matrix in \eqref{Mmm3} and applying Theorem \ref{Th3}, we can find a unitary matrix $U_m=U$ of the form \eqref{3.2}, satisfying \eqref{uij3} and \eqref{detU1}, such that \eqref{FU} holds. Hence,
\begin{equation}\label{Mm1Um}
  [M_{m-1}]_{m\tm m} U_m=[M_m]_{m\tm m}
\end{equation}
is a spectral factor of $[S]_{m\tm m}$, and equation \eqref{SMMm1} remains valid if we change $m-1$ to $m$. Thus, if we accordingly construct all the matrices $\mathbf{U}_2, \mathbf{U}_3,\ldots\mathbf{U}_r$ in \eqref{S3}, we obtain a spectral factor $S_+$.

\section{Concluding thoughts and future work}

In this note, we outline a general scheme demonstrating that the Janashia-Lagvilava algorithm can be directly applied for the factorization of matrix functions defined on the real line, eliminating the need for conformal mapping between $\bC_+$ and $\bT_+$. The extent to which this direct approach provides computational advantages compared to conformal mapping necessitates numerical simulations, a task that lies beyond the scope of this paper. The outcomes of numerical tests, alongside the characteristics of the proposed algorithm, may be notably influenced by the efficiency with which we can perform Fourier and Hilbert transforms of the given function on the real line. Executing these transformations at an appropriate level necessitates a comprehensive analysis of existing methods and a broad understanding of numerical techniques in general, which is beyond the expertise of the author. It may be more suitable for future research groups, intending to apply matrix spectral factorization to continuous systems in practice, to conduct such tests. The author is open to cooperating with and providing consultation to such groups.

\section{Acknowledgments}
{
The work was supported by the EU through the H2020-MSCA-RISE-2020 project EffectFact, Grant agreement ID: 101008140.
}

\def\cprime{$'$}
\providecommand{\bysame}{\leavevmode\hbox to3em{\hrulefill}\thinspace}
\providecommand{\MR}{\relax\ifhmode\unskip\space\fi MR }
\providecommand{\MRhref}[2]{%
  \href{http://www.ams.org/mathscinet-getitem?mr=#1}{#2}
}
\providecommand{\href}[2]{#2}

\end{document}